\theoremstyle{plain}
\newtheorem{thm}{Theorem}[section] 
\newtheorem{cor}[thm]{Corollary}
\newtheorem{prop}[thm]{Proposition}
\newtheorem{lem}[thm]{Lemma}
\theoremstyle{definition}
\newtheorem{defn}[thm]{Definition}
\theoremstyle{remark}
\newtheorem{rem}[thm]{Remark}
\numberwithin{equation}{section}
\newcommand{\id}{\operatorname{id}}
\newcommand{\rank}{\operatorname{rank}}
\newcommand{\Sep}{\operatorname{Sep}}
\def\<{\left<}
\def\>{\right>}
\def\cstar{$C^*$-algebra}
\begin{document}
\title[Quantum channels]{Quantum channels that\\
preserve entanglement}
\author{William Arveson}
%
%
%
%
\subjclass[2000]{Primary 46N50; Secondary 81P68, 94B27}
\date{16 January, 2008}

\begin{abstract} Let $M$ and $N$ be full matrix algebras.  
A unital completely positive (UCP) map $\phi:M\to N$ is said to {\em preserve entanglement} 
if its inflation $\phi\otimes \id_N : M\otimes N\to N\otimes N$ has the following property: 
for every maximally entangled pure state $\rho$ of $N\otimes N$, $\rho\circ(\phi\otimes \id_N)$ 
is an entangled state of $M\otimes N$.  

We show that there is a dichotomy in 
that every UCP map that is not entanglement breaking in the sense of 
Horodecki-Shor-Ruskai must preserve entanglement, and that entanglement preserving 
maps of every possible rank exist in abundance.  We also show that with 
probability $1$, {\em all} UCP maps 
of relatively small rank preserve entanglement, but that this is not so for UCP maps 
of maximum rank.  
\end{abstract}
\maketitle

\section{Introduction}\label{S:in}

Let $H$ and $K$ be finite dimensional Hilbert spaces.  In the literature 
of quantum information theory, a {\em quantum channel} 
(from $\mathcal B(H)$ to $\mathcal B(K)$) can be described equivalently as a completely positive linear map 
\begin{equation}\label{inEq0}
\psi: \mathcal B(H)^\prime\to \mathcal B(K)^\prime
\end{equation}
from 
the dual of $\mathcal B(H)$ to the dual of $\mathcal B(K)$ that carries states to states.  
One can view quantum channels as 
the morphisms of a category whose objects are the dual spaces $\mathcal B(H)^\prime$ of 
finite dimensional type I factors $\mathcal B(H)$.  Quantum channels are 
the {\em adjoints} of unital completely positive (UCP) maps in the sense that the most general 
map $\psi$ of  (\ref{inEq0}) must have the form 
\begin{equation*}
\psi(\rho)=\rho\circ\phi, \qquad \rho\in \mathcal B(H)^\prime, 
\end{equation*}
where $\phi:\mathcal B(K)\to \mathcal B(H)$ is a UCP map.  
In this paper we focus on UCP maps, keeping in 
mind that all statements about the category of UCP maps (with objects $\mathcal B(H)$)
translate contravariantly 
into statements about the category of quantum channels (with objects $\mathcal B(H)^\prime$).  

A state $\rho$ of $\mathcal B(K\otimes H)$ is called {\em separable} 
if it is a convex combination 
of product states 
\begin{equation}\label{inEq1}
\rho(a\otimes b)=\sum_{k=1}^s t_k \cdot\sigma_k(a)\tau_k(b), \qquad a\in \mathcal B(K), \quad b\in \mathcal B(H), 
\end{equation}
where $\sigma_k$ and $\tau_k$ are states of $\mathcal B(K)$ and $\mathcal B(H)$ respectively, 
and the $t_k$ are positive numbers with sum $1$.   States that are not separable are 
said to be {\em entangled}.  
Since the set of all separable states of $\mathcal B(K\otimes H)$ is 
compact (see Remark 1.1 of \cite{arvEnt1}), the entangled states form a relatively open 
subset of the state space of $\mathcal B(K\otimes H)$.

Since the tensor product of two completely positive maps is completely positive, 
every UCP map $\phi: \mathcal B(K)\to \mathcal B(H)$ gives rise to an inflated UCP 
map $\phi\otimes\id: \mathcal B(K\otimes H)\to \mathcal B(H\otimes H)$, 
defined uniquely by sending $a\otimes b$ to $\phi(a)\otimes b$, $a\in \mathcal B(K)$, $b\in\mathcal B(H)$.  
In turn,  $\phi\otimes\id$ induces a map from states $\rho$ of $\mathcal B(H\otimes H)$ 
to states $\rho^\prime=\rho\circ(\phi\otimes\id)$ of $\mathcal B(K\otimes H)$: 
\begin{equation}\label{inEq00}
\rho^\prime(a\otimes b)=\rho(\phi(a)\otimes b),\qquad a\in \mathcal B(K), \quad b\in \mathcal B(H).  
\end{equation}

The notion of an  entanglement breaking channel was introduced and studied 
in the papers (\cite{HorShRu} and \cite{rus}). 
In our context, a UCP map $\phi: \mathcal B(K)\to \mathcal B(H)$ is said to be 
{\em entanglement breaking} iff for 
every state $\rho$ of  $\mathcal B(H\otimes H)$, the state 
$\rho^\prime=\rho\circ(\phi\otimes \id)$ is a separable state of 
$\mathcal B(K\otimes H)$.  
It was pointed out that 
entanglement breaking UCP maps are the most 
degenerate, where in this case ``degeneracy" means that the associated quantum channel 
can be simulated by a classical channel.  That is because, 
as shown in \cite{HorShRu},  
the entanglement breaking 
UCP maps $\phi: \mathcal B(K)\to \mathcal B(H)$ 
are precisely those that admit a representation of the form 
\begin{equation}\label{inEq3}
\phi(x)=\sum_{k=1}^s \omega_k(x) e_k, \qquad x\in\mathcal B(K), 
\end{equation}
where $\omega_1,\dots,\omega_s$ are states of $\mathcal B(K)$ and $e_1,\dots,e_s$ are positive 
operators in $\mathcal B(H)$ having sum $\mathbf 1$.

We now introduce a class of UCP maps that appear to lie 
at the opposite extreme from the entanglement breaking 
ones.  Fix a UCP map $\phi:\mathcal B(K)\to \mathcal B(H)$ and consider 
the action of the channel  $\phi\otimes\id$ 
on {\em pure} states $\rho$ of $\mathcal B(H\otimes H)$.  If 
$x\in\mathcal B(H\otimes H)\mapsto \langle x\xi,\xi\rangle$ 
is the pure state  corresponding to a unit vector $\xi\in H\otimes H$,  then 
the corresponding state $\rho_\xi$ of $\mathcal B(K\otimes H)$ defined by  (\ref{inEq00}) becomes 
\begin{equation}\label{inEq4}
\rho_\xi(a\otimes b)=\langle (\phi(a)\otimes b)\xi,\xi\rangle, \qquad a\in\mathcal B(K), \quad 
b\in \mathcal B(H).  
\end{equation}
Notice that whenever $\xi=\eta\otimes \zeta$ decomposes into a tensor product of vectors in $H$,  $\rho_\xi$ 
decomposes into a tensor product of states.  In order to rule out such ``classical" 
correlations in pure states, we fix attention 
on unit vectors $\xi\in H\otimes H$ that are {\em marginally cyclic} in the sense that 
they satisfy 
\begin{equation}\label{inEq5}
(\mathcal B(H)\otimes\mathbf 1)\xi = H\otimes H, 
\end{equation}
or equivalently (see Remark \ref{mfRem1}), for every $b\in \mathcal B(H)$ one has 
$$
(\mathbf 1\otimes b)\xi=0\implies b=0.  
$$
Note that the second assertion is simply that the state of 
$\mathcal B(H)$ defined by $\omega(b)=\langle (\mathbf 1\otimes b)\xi,\xi\rangle$ 
should be {\em faithful}: $\omega(b^*b)=0\implies b=0$.  

\begin{defn}\label{inDef1}
 A UCP map $\phi:\mathcal B(K)\to \mathcal B(H)$ is said to  
{\em preserve entanglement} if for every marginally cyclic unit vector $\xi\in H\otimes H$, 
the state $\rho_\xi$ of (\ref{inEq4}) is an entangled state of $\mathcal B(K\otimes H)$.  
\end{defn}

In Section \ref{S:pu} we show how the parameterization of states given in \cite{arvEnt1} can be 
appropriately adapted 
to UCP maps so as to make the space $\Phi^r$ of all UCP maps $\phi:\mathcal B(K)\to \mathcal B(H)$ 
of rank $\leq r$ into a compact probability space that carries a unique unbiased probability measure 
$P^r$, and we show in Section \ref{S:ep} 
that $P^r$ is concentrated on the set of maps of rank $r$.  Thus, the probability space 
$(\Phi^r,P^r)$ represents {\em choosing a UCP map of rank $r$ at random}.  
We prove a zero-one law for channels in Section \ref{S:ee} which expresses in 
strong probabilistic terms the dichotomy  
that a UCP map either preserves 
entanglement or it has the degenerate form (\ref{inEq3}).  

We then apply the main results 
of \cite{arvEnt1} to show that there are plenty of entanglement preserving UCP maps 
of every possible rank, and that {\em almost surely every} UCP map of relatively small rank 
preserves entanglement (see Theorem \ref{epThm1}).    We conclude with a discussion of 
extreme points of the convex set of 
UCP maps that implies: Whenever an extremal UCP map of rank $r$ exists, then 
almost surely every UCP map of rank $r$ is extremal.

\begin{rem}[Terminology: maximally entangled pure states]\label{mfRem1}  Let $H$ be a finite dimensional 
Hilbert space.  We offer some remarks to support  
our singling out of marginally cyclic vectors as 
candidates for ``maximally entangled" pure states of $\mathcal B(H\otimes H)$.  
While the term {\em maximally entangled state} occurs frequently 
in the literature of physics and quantum information theory - particularly in the 
context of pure states - and while there are examples that appear to attach meaning 
to some deeper concept underlying the term, there appears to be no 
satisfactory mathematical definition of 
that concept.  We believe that the pure states associated with 
marginally cyclic vectors of $H\otimes H$ are an appropriate 
precise formulation of 
this undefined term because of the following observations.  

Let $\xi$ be a unit vector in $H\otimes H$ and let $\rho(x)=\langle x\xi,\xi\rangle$ 
be the corresponding pure state of $\mathcal B(H\otimes H)$.  $\rho$ restricts to 
a state $\omega$ 
of $\mathcal B(H)$, 
$$
\omega(b)=\langle (\mathbf 1\otimes b)\xi,\xi\rangle, \qquad b\in \mathcal B(H),   
$$
and to some extent, properties of the ``marginal" state $\omega$ determine properties 
of $\rho$.  
For example, $\rho$ is separable in 
the sense that $\xi$ can be decomposed into a tensor product $\eta\otimes \zeta$ with 
$\eta,\zeta\in H$ iff $\omega$ is a {\em pure} state of $\mathcal B(H)$.  

Since pure states are those whose density operators have rank $1$, this suggests that 
in order for $\rho$ to be ``maximally entangled", it is reasonable to require that the rank 
of the density operator of $\omega$ should be as large as possible; i.e., that 
$\omega$ should be a {\em faithful state} of $\mathcal B(H)$.  If we accept 
that as a definition,  then 
$\rho(x)=\langle x\xi,\xi\rangle$ is considered to be maximally entangled whenever 
$$
\omega(b^*b)=0\implies b=0, \qquad b\in\mathcal B(H),   
$$
or equivalently,  for every $b\in\mathcal B(H)$, 
\begin{equation}\label{mfEq2}
(\mathbf 1\otimes b)\xi=0 \implies b=0.  
\end{equation}
In turn, since the two von Neumann algebras 
$\mathcal B(H)\otimes\mathbf 1$ and $\mathbf 1\otimes \mathcal B(H)$ 
are commutants of each other, (\ref{mfEq2}) is equivalent to the assertion 
\begin{equation*}
\{(a\otimes \mathbf 1)\xi: a\in\mathcal B(H)\}=H\otimes H.      
\end{equation*}
We have found it useful to regard a unit vector $\xi\in H\otimes H$ 
as ``maximally entangled" precisely when it is marginally cyclic.  In any case, 
the familiar examples, such as $\xi=n^{-1/2}(e_1\otimes e_1+\cdots+e_n\otimes e_n)$ 
where $\{e_k\}$ is an orthonormal basis for $H$, are all marginally cyclic vectors.  
\end{rem}

Since writing this paper, we learned from M. B. Ruskai that a definition of 
``maximally entangled pure state" has been proposed in \cite{cub1} and \cite{cub2} that is 
equivalent to the one we have proposed above (such vectors are said to have ``maximum Schmidt rank" 
in \cite{cub1} and \cite{cub2}).  Basically, those authors obtain information about the 
relations between 
subspaces $M\subseteq H\otimes H$ with the property 
that every unit vector in $M$ has ``Schmidt rank" at least $r$ and they apply 
their results to some of the measures of entanglement that have been proposed 
in the literature of quantum information theory.  

\section{Real-analytic parameters for UCP maps}\label{S:pu}

Let $H$ and $K$ be finite dimensional Hilbert spaces with $n=\dim H$, $m=\dim K$ 
and fix a UCP map $\phi: \mathcal B(K)\to \mathcal B(H)$.  
A straightforward application of Stinespring's 
theorem (as formulated in Appendix \ref{S:ap}) implies that there is an $r$-tuple of operators $v_1,\dots,v_r\in\mathcal B(H,K)$ such that 
\begin{equation}\label{puEq1}
\phi(a)=v_1^*av_1+\cdots+v_r^*av_r, \qquad a\in\mathcal B(K), 
\end{equation}
and that the operators $v_k$ satisfy 
\begin{equation}\label{puEq3}
v_1^*v_1+\cdots+v_r^*v_r=\mathbf 1_H.  
\end{equation}
Moreover, one 
can arrange that $v_1,\dots, v_r$ are linearly independent, and in that case 
the integer $r$ is called the {\em rank} of $\phi$.  
Let $\Phi^r(K,H)$ be the compact space of all UCP maps 
$\phi: \mathcal B(K)\to \mathcal B(H)$ of rank 
at most $r$.  Since $H$ and $K$ will be held fixed, we lighten notation 
by writing $\Phi^r$ for $\Phi^r(K,H)$.    

In this section we show that 
for every $r=1,2,\dots,mn$, there is a convenient parameterization of 
the space $\Phi^r$ and we describe its basic properties.  
While this is a reformulation of some of the results of \cite{arvEnt1}, there are 
enough differences in the two formulations that it is appropriate to discuss this 
parameterization of $\Phi^r$ in some detail.  

Given two $r$-tuples $(v_1,\dots,v_r)$ and $(v_1^\prime,\dots, v_r^\prime)$ 
of operators  in 
$\mathcal B(H,K)$ which 
are {\em not} necessarily linearly independent, then by Proposition \ref{apProp1} of 
Appendix \ref{S:ap}, 
$$
v_1^*xv_1+\cdots+v_r^*xv_r=v_1^{\prime *}xv_1^\prime+\cdots+v_r^{\prime *}xv_r^\prime, 
\qquad x\in\mathcal B(K)
$$
 iff there is a unitary $r\times r$ matrix $(\lambda_{ij})\in M_r(\mathbb C)$ 
such that 
\begin{equation}\label{puEq2}
v_i^\prime=\sum_{j=1}^r \lambda_{ij}v_j, \qquad i=1,2,\dots, r.  
\end{equation}

Now consider the space $V^r(H,K)$ of all $r$-tuples 
$v=(v_1,\dots,v_r)$ with operator components $v_k\in\mathcal B(H,K)$ that satisfy 
$v_1^*v_1+\cdots+v_r^*v_r=\mathbf 1_H$ (we do {\em not} require that the component operators 
are linearly independent).  Theorem 2.1 of \cite{arvEnt1} implies that 
 $V^r(H,K)$ is a compact connected 
real-analytic Riemannian manifold 
that is acted upon transitively by a compact group of isometries.  
For every $v=(v_1,\dots,v_r)\in V^r(H,K)$, 
\begin{equation}\label{puEq3.1}
\phi_v(x)=v_1^*xv_1+\cdots + v_r^*xv_r, \qquad x\in \mathcal B(K)
\end{equation}
defines a UCP map $\phi_v:\mathcal B(K)\to \mathcal B(H)$ of rank at most $r$.  
The following result summarizes the 
properties of this parameterization $v\mapsto\phi_v$ and is a direct consequence 
of the preceding remarks.  

\begin{prop}\label{puProp1}
Fix two finite dimensional Hilbert spaces $H$, $K$ with $\dim H=n$, $\dim K=m$.  
For every $r=1,2,\dots,mn$, let $\Phi^r$ be the compact space of all UCP maps 
$\phi:\mathcal B(K)\to \mathcal B(H)$ of rank $\leq r$.

Every element of $\Phi^r$ has the form (\ref{puEq3.1}) for some 
$v\in V^r(H,K)$.  This parameterization $v\mapsto \phi_v$ is continuous 
and one has $\phi_v=\phi_{v^\prime}$ iff $v$ and $v^\prime$ belong to the 
same $U(r)$ orbit as in (\ref{puEq2}).  Hence the map $v\mapsto\phi_v$ 
promotes uniquely to a homeomorphism of the 
orbit space $V^r(H,K)/U(r)$ onto the space $\Phi^r$ of UCP maps of rank $\leq r$.  
\end{prop}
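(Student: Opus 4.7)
The plan is to bundle the three ingredients the excerpt has already put on the table: the Stinespring/Kraus decomposition \eqref{puEq1}--\eqref{puEq3}, Proposition~\ref{apProp1} from the appendix (which, as the paragraph introducing it emphasizes, applies to tuples that need \emph{not} be linearly independent), and the compactness of $V^r(H,K)$ from Theorem~2.1 of \cite{arvEnt1}. I would verify the four assertions in turn and then invoke a standard compact-to-Hausdorff argument at the end.

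For surjectivity, take any $\phi\in\Phi^r$ and let $r'=\rank\phi\leq r$. Stinespring provides linearly independent operators $w_1,\dots,w_{r'}\in\mathcal B(H,K)$ with $\phi(x)=\sum_{k=1}^{r'}w_k^*xw_k$ and $\sum_{k=1}^{r'}w_k^*w_k=\mathbf 1_H$. Appending $r-r'$ zero operators produces $v=(w_1,\dots,w_{r'},0,\dots,0)\in V^r(H,K)$; the sum relation is preserved and $\phi_v=\phi$. Continuity of $v\mapsto\phi_v$ is immediate because \eqref{puEq3.1} presents $\phi_v$ as a quadratic polynomial in the entries of $v$, with values in the finite-dimensional space of linear maps $\mathcal B(K)\to\mathcal B(H)$. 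The fiber identification ``$\phi_v=\phi_{v'}$ iff $v,v'$ lie in the same $U(r)$ orbit'' is exactly the two directions of Proposition~\ref{apProp1}, now in force without any linear-independence hypothesis.

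For the remaining assertion, observe that $v\mapsto\phi_v$ is continuous, $U(r)$-invariant, and surjective onto $\Phi^r$, so the universal property of the quotient topology yields a continuous bijection $\bar\phi:V^r(H,K)/U(r)\to\Phi^r$. Since $V^r(H,K)$ is compact, so is the quotient; and $\Phi^r$ is Hausdorff as a subspace of the finite-dimensional vector space of linear maps $\mathcal B(K)\to\mathcal B(H)$. A continuous bijection from a compact space onto a Hausdorff space is automatically a homeomorphism. I do not expect any serious obstacle: the one mild point of care is the padding step — passing from a Stinespring $r'$-tuple with linearly independent entries to an $r$-tuple in $V^r(H,K)$ — together with a check that Proposition~\ref{apProp1} is applied to tuples allowed to be linearly dependent, which is precisely how it is stated in the appendix.
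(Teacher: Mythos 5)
Your proposal is correct and follows essentially the route the paper intends: the paper gives no explicit proof, stating only that the proposition is ``a direct consequence of the preceding remarks,'' and your argument is exactly those remarks spelled out --- surjectivity via the Stinespring/Kraus form padded with zeros, the fiber identification via Proposition~\ref{apProp1} in its linear-dependence-allowed form, and the homeomorphism via the compact-to-Hausdorff continuous bijection argument. No gaps.
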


Fixing $H$, $K$ as in Proposition \ref{puProp1}, consider the integer $q=m^2n^2+1$, and the much larger 
group $U(q)$ of all unitary $q\times q$ matrices $(\lambda_{ij})\in M_q(\mathbb C)$.    We single out 
the following subset of $V^r(H,K)$, 
$$
\Sep(V^r(H,K))=\bigcup_{\lambda\in U(q)} Z_\lambda
$$
where 
$$
Z_\lambda=\{w=(w_1,\dots,w_r)\in V^r(H,K): \rank(\sum_{j=1}^r\lambda_{ij}w_j)\leq 1, \quad 1\leq i\leq q\}.  
$$

The key property of $\Sep(V^r(H,K))$ is described as follows.  

\begin{prop}\label{puProp2}
Let $\phi:\mathcal B(K)\to \mathcal B(H)$ be a UCP map of rank $r$ and let 
$\xi\in H\otimes H$ be a marginally cyclic unit vector.  Then 
the state $\rho_\xi$ of $\mathcal B(K\otimes H)$ defined by 
$$
\rho_\xi(a\otimes b)=\langle (\phi(a)\otimes b)\xi,\xi\rangle, \qquad a\in\mathcal B(K), 
\quad b\in \mathcal B(H)
$$
is separable iff $v\in\Sep(V^r(H,K))$.  
\end{prop}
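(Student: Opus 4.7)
The plan is to rewrite the density operator of $\rho_\xi$ on $K\otimes H$ explicitly in terms of $v$, to translate ``separable'' into the existence of an alternate rank-one decomposition whose vectors are product tensors, and then to use the freedom in decomposing a fixed positive operator to land inside some $Z_\lambda$.

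First I would compute, using the Kraus form $\phi_v(a)=\sum_k v_k^*av_k$, that
\[
\rho_\xi(a\otimes b) \;=\; \sum_{k=1}^r \langle (a\otimes b)\eta_k,\eta_k\rangle,\qquad \eta_k:=(v_k\otimes\mathbf 1)\xi\in K\otimes H,
\]
so the density operator of $\rho_\xi$ is $\rho=\sum_{k=1}^r |\eta_k\rangle\langle\eta_k|$. Under the canonical Hilbert--Schmidt isomorphism $K\otimes H\cong\mathcal B(H,K)$, the vector $(w\otimes\mathbf 1)\xi$ corresponds to the operator $wT_\xi$, where $T_\xi\in\mathcal B(H)$ is the ``coefficient matrix'' of $\xi$; marginal cyclicity of $\xi$ is precisely the invertibility of $T_\xi$ (cf.\ Remark~\ref{mfRem1}). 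Hence the Schmidt rank of $(w\otimes\mathbf 1)\xi$ equals $\rank(w)$, and in particular $(w\otimes\mathbf 1)\xi$ is a product vector iff $w$ has rank at most one. Because $\phi$ has rank $r$, the $v_k$ are linearly independent, and so are the $\eta_k$.

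By definition $\rho_\xi$ is separable iff $\rho$ admits a decomposition $\rho=\sum_{i=1}^N \mu_i\mu_i^*$ in which each $\mu_i\in K\otimes H$ is a (possibly zero) product vector. Carath\'eodory applied inside the real $(mn)^2$-dimensional space of Hermitian operators on $K\otimes H$ shows that if any such decomposition exists, then one exists with $N=q=m^2n^2+1$ (padding by zeros as needed). On the other hand, the standard ``unitary mixing'' description of decompositions of a fixed positive operator (the Hughston--Jozsa--Wootters lemma) says that, starting from the rank-$r$ decomposition $\rho=\sum_k\eta_k\eta_k^*$ with linearly independent $\eta_k$, every length-$q$ decomposition $\rho=\sum_i\mu_i\mu_i^*$ must take the form $\mu_i=\sum_{j=1}^r\lambda_{ij}\eta_j$ for some $q\times r$ matrix $(\lambda_{ij})$ with orthonormal columns. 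Combining these two inputs with the rank identification of the preceding paragraph, $\rho_\xi$ is separable iff there exists such a $\lambda$ with $\rank\bigl(\sum_j\lambda_{ij}v_j\bigr)\leq 1$ for every $i=1,\dots,q$.

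To close the loop I would observe that any $q\times r$ matrix with orthonormal columns is the first block of some unitary $\tilde\lambda\in U(q)$, and conversely the first $r$ columns of any $\tilde\lambda\in U(q)$ form such an isometry. Since the rank conditions defining $Z_{\tilde\lambda}$ only involve the first $r$ columns of $\tilde\lambda$, the criterion just extracted is exactly $v\in Z_{\tilde\lambda}$ for some $\tilde\lambda\in U(q)$, i.e.\ $v\in\Sep(V^r(H,K))$. The main obstacle I expect is supplying the two linear-algebra inputs cleanly in the paper's conventions: the Hughston--Jozsa--Wootters uniqueness (up to an isometry) of rank-one decompositions of a fixed positive operator, and the Carath\'eodory-type count confirming that $q=m^2n^2+1$ is large enough independently of $r\leq mn$. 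Both are routine once the Hilbert--Schmidt identification $K\otimes H\cong\mathcal B(H,K)$ is pinned down, but care is needed so that the ``$\rank\leq 1$'' condition defining $Z_\lambda$ lines up with the product-vector condition on the $\mu_i$ via the invertibility of $T_\xi$.
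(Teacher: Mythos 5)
Your argument is correct and, as far as one can tell, reconstructs precisely the proof that the paper outsources to Proposition 7.7 of \cite{arvEnt1}: the Hilbert--Schmidt identification under which marginal cyclicity of $\xi$ becomes invertibility of $T_\xi$ (so that product vectors $\mu_i$ correspond exactly to operators $\sum_j\lambda_{ij}v_j$ of rank $\leq 1$), the uniqueness-up-to-isometry of rank-one decompositions of a fixed positive operator (this is Proposition 5.1 of \cite{arvEnt1}, the vector analogue of Proposition \ref{apProp1}), and the Carath\'eodory count that accounts for the specific value $q=m^2n^2+1$. The two linear-algebra inputs you flag as remaining obstacles are exactly the standard facts the cited reference supplies, and both directions of the equivalence go through as you describe.
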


\begin{proof}
This is a restatement of Proposition 7.7 of \cite{arvEnt1}.  
\end{proof}

After noting that the condition $v\in \Sep(V^r(H,K))$ does not depend on the choice 
of marginally cyclic vector $\xi$, we can combine Proposition \ref{puProp2} 
with a result of \cite{HorShRu} to conclude:

\begin{cor}\label{puCor1}  Let $\phi: \mathcal B(K)\to \mathcal B(H)$ be an arbitrary UCP map
 and let 
$\mathcal S_\phi$ be the set of all states $\rho$ of $\mathcal B(K\otimes H)$ of 
the form 
\begin{equation}\label{puEq4}
\rho(a\otimes b)=\langle(\phi(a)\otimes b)\xi,\xi\rangle, \qquad a\in\mathcal B(K), \quad 
b\in\mathcal B(H)
\end{equation}
where $\xi$ ranges over the set of marginally cyclic unit vectors in $H\otimes H$.  
If $\mathcal S_\phi$ contains a single entangled state then every state of $\mathcal S_\phi$ is 
entangled and $\phi$ preserves entanglement.  Otherwise, $\phi$ is entanglement 
breaking.    
\end{cor}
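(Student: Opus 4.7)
The plan is to combine Proposition~\ref{puProp2} with the Horodecki-Shor-Ruskai characterization of entanglement breaking maps from \cite{HorShRu}.

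First, I would use Proposition~\ref{puProp1} to write $\phi=\phi_v$ for some $v=(v_1,\dots,v_r)\in V^r(H,K)$, where $r=\operatorname{rank}(\phi)$. The crucial input from Proposition~\ref{puProp2} is that for any marginally cyclic unit vector $\xi\in H\otimes H$, the state $\rho_\xi$ is separable if and only if $v\in \Sep(V^r(H,K))$. Because the right-hand condition depends only on $v$ and not on $\xi$, this immediately produces the required dichotomy: either every $\rho_\xi\in\mathcal S_\phi$ is separable, or every state in $\mathcal S_\phi$ is entangled. Note that $\mathcal S_\phi$ is nonempty, since the familiar vector $\xi=n^{-1/2}\sum_{k}e_k\otimes e_k$ (for any orthonormal basis $\{e_k\}$ of $H$) is marginally cyclic.

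If $\mathcal S_\phi$ contains even one entangled state, the dichotomy forces every $\rho_\xi$ in $\mathcal S_\phi$ to be entangled, and by Definition~\ref{inDef1} this says exactly that $\phi$ preserves entanglement. This handles the first assertion.

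The remaining case is the main obstacle: I must show that if every $\rho_\xi\in\mathcal S_\phi$ is separable, then $\rho\circ(\phi\otimes\operatorname{id})$ is separable for \emph{every} state $\rho$ of $\mathcal B(H\otimes H)$, not merely for the pure states generated by marginally cyclic vectors. This is precisely where the cited result of \cite{HorShRu} enters. Taking the specific marginally cyclic vector $\xi=n^{-1/2}\sum_k e_k\otimes e_k$, the Horodecki-Shor-Ruskai criterion asserts that separability of the resulting Choi-type state $\rho_\xi$ already forces $\phi$ to admit a representation of the form~(\ref{inEq3}), i.e.\ $\phi$ is entanglement breaking. Since our hypothesis provides the separability of $\rho_\xi$ for this particular $\xi$ a fortiori, the conclusion follows.
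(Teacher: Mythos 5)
Your proposal is correct and follows essentially the same route as the paper: the dichotomy comes from Proposition~\ref{puProp2} via the $\xi$-independence of the condition $v\in\Sep(V^r(H,K))$, and the ``otherwise'' case is settled by applying the Horodecki--Shor--Ruskai criterion (Theorem 4, B $\iff$ C, of \cite{HorShRu}) to the state associated with $\xi=n^{-1/2}\sum_k e_k\otimes e_k$. Your added observation that $\mathcal S_\phi$ is nonempty is a harmless refinement.
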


\begin{proof}
To prove the last sentence, let $e_1,\dots,e_n$ be an orthonormal 
basis for $H$ and let $\xi$ be the marginally cyclic unit vector 
$$
\xi=\frac{1}{\sqrt n}(e_1\otimes e_1+\cdots+e_n\otimes e_n).  
$$
The implications B $\iff$ C of Theorem 4 of \cite{HorShRu} are equivalent to the 
assertion that $\phi$ is entanglement breaking iff the state $\rho$ is separable, 
hence the assertion follows from the first two sentences of Corollary \ref{puCor1}.    
\end{proof}

\section{A zero-one law for UCP maps}\label{S:ee}

The unit sphere $S(H)=\{\xi\in H: \|\xi\|=1\}$ of an $n$ dimensional Hilbert space $H$ 
is $S^{2n-1}$, a real-analytic Riemannian symmetric space that carries a unique unitarily 
invariant probability measure $\mu_H$.  

Fix a UCP map $\phi:\mathcal B(K)\to \mathcal B(H)$.  Every vector $\xi$ in 
the unit sphere of $H\otimes H$ gives rise to a state $\rho_\xi$ of $\mathcal B(K\otimes H)$
by way of 
\begin{equation}\label{eeEq1}
\rho_\xi(a\otimes b)=\langle (\phi(a)\otimes b)\xi,\xi\rangle, \qquad a\in\mathcal B(K), 
\quad b\in\mathcal B(H), 
\end{equation}
thereby obtaining a map $\hat\phi:\xi\mapsto \rho_\xi$ from $S(H\otimes H)$ to 
states of $\mathcal B(K\otimes H)$ that we can view as a random variable associated 
with the probability space $(S(H\otimes H), \mu_{H\otimes H})$.  
We now show 
that it is possible to determine whether $\phi$ preserves entanglement in a way that makes no 
reference to marginally cyclic vectors, but rather to properties of the random 
variable $\hat\phi$.  Indeed, Theorem \ref{eeThm1} frames 
the dichotomy of UCP maps as follows: 
{\em  The channel associated with a given UCP map $\phi: \mathcal B(K)\to \mathcal B(H)$
either maps almost all pure states to separable states, or it maps almost all pure states 
to entangled states.}  That assertion is expressed more concisely as a zero-one law: 
$$
\mu_{H\otimes H}\{\xi\in S(H\otimes H): \hat\phi(\xi) {\rm{\ is\ entangled\,}}\}=0\ {\rm or\ } 1.  
$$

\begin{thm}\label{eeThm1}
For every UCP map $\phi: \mathcal B(K)\to \mathcal B(H)$, the following are equivalent:
\begin{enumerate}
\item[(i)]  For almost every vector $\xi\in S(H\otimes H)$, the state $\rho_\xi$ 
is entangled.    
\item[(ii)]  For every vector $\xi$ in some Borel subset of 
$S(H\otimes H)$ of positive measure, the state $\rho_\xi$ of 
(\ref{eeEq1}) is entangled.  
\item[(iii)]  For every marginally cyclic unit vector $\xi\in H\otimes H$, the state $\rho_\xi$ 
is entangled; i.e., $\phi$ preserves entanglement.  
\item[(iv)]  There exists a marginally cyclic unit vector $\xi\in H\otimes H$ such that $\rho_\xi$ is entangled.  
\end{enumerate}
\end{thm}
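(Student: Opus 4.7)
The plan is to establish the circular chain (i) $\Rightarrow$ (ii) $\Rightarrow$ (iv) $\Rightarrow$ (iii) $\Rightarrow$ (i). The substantive content is carried by two ingredients: Corollary \ref{puCor1}, which is already proved, and the elementary measure-theoretic observation that the set of marginally cyclic unit vectors forms a subset of $S(H\otimes H)$ of full $\mu_{H\otimes H}$-measure. Almost all of the work goes into this second ingredient.

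First I would establish the genericity claim. Fix an orthonormal basis $e_1,\dots,e_n$ of $H$ and identify $H\otimes H$ with $M_n(\mathbb C)$ via $\sum_{i,j}c_{ij}\,e_i\otimes e_j \leftrightarrow C=(c_{ij})$. A short calculation shows that, under this identification, the subspace $\{(a\otimes\mathbf 1)\xi : a\in\mathcal B(H)\}$ corresponds to the image of the linear map $A\mapsto AC$ on $M_n(\mathbb C)$, and this image exhausts $M_n(\mathbb C)$ iff $C$ is invertible. Hence $\xi$ is marginally cyclic iff $\det C\neq 0$, and since the vanishing set of the nonzero polynomial $\det$ is a proper real-algebraic subvariety of $\mathbb C^{n^2}$, its intersection with the unit sphere has $\mu_{H\otimes H}$-measure zero.

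With this in hand, the implications fall into place. The step (i) $\Rightarrow$ (ii) is trivial. For (ii) $\Rightarrow$ (iv), the Borel set of $\xi$ with $\rho_\xi$ entangled has positive measure and therefore must intersect the full-measure set of marginally cyclic vectors, yielding the required $\xi$. The step (iv) $\Rightarrow$ (iii) is exactly the last two sentences of Corollary \ref{puCor1}: a single entangled $\rho_\xi$ with $\xi$ marginally cyclic forces every state in $\mathcal S_\phi$ to be entangled. Finally, (iii) $\Rightarrow$ (i) follows again from the genericity step, since the marginally cyclic vectors on which $\rho_\xi$ is guaranteed entangled already form a set of full measure.

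The only step that requires any work is the genericity of marginal cyclicity, and even there the main obstacle is really just the two linear-algebra identifications above; once they are in hand, the zero-one dichotomy is a formal consequence of Corollary \ref{puCor1}. I would anticipate no serious obstacle beyond bookkeeping the identification of $H\otimes H$ with $M_n(\mathbb C)$ carefully enough to recognize marginal cyclicity as the complement of the hypersurface $\{\det C=0\}$.
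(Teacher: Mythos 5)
Your proposal is correct and follows essentially the same route as the paper: everything reduces to Corollary \ref{puCor1} together with the fact that the non-marginally-cyclic vectors form a measure-zero subset of $S(H\otimes H)$, which the paper proves in Lemma \ref{tcLem1} via the real-analytic function $\xi\mapsto \xi_1\wedge\cdots\wedge\xi_n$ --- literally $\det C$ times a fixed basis vector of $\wedge^nH$, so your determinant criterion is the same function in different clothing.
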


The proof of Theorem \ref{eeThm1} requires: 

\begin{lem}\label{tcLem1}
The set of marginally cyclic unit vectors of $H\otimes H$ is relatively open and dense in 
$S(H\otimes H)$,  and its complement has measure zero. 
\end{lem}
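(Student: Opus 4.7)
The plan is to parameterize $H\otimes H$ by $n\times n$ complex matrices, show that marginal cyclicity corresponds precisely to invertibility of the associated matrix, and then invoke the elementary fact that the vanishing locus of the determinant is a proper algebraic subvariety.

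Fix an orthonormal basis $e_1,\dots,e_n$ of $H$ and write each $\xi\in H\otimes H$ as $\xi=\sum_{i,j}a_{ij}\,e_i\otimes e_j$, identifying $\xi$ with the matrix $A=(a_{ij})\in M_n(\mathbb C)$. This is a unitary isomorphism when $M_n(\mathbb C)$ is given the Hilbert--Schmidt norm, so $S(H\otimes H)$ corresponds to the unit sphere of $M_n(\mathbb C)$ in $\|\cdot\|_{HS}$. A direct calculation gives $(b\otimes \mathbf 1)\xi\leftrightarrow bA$ for every $b\in\mathcal B(H)$, so $(\mathcal B(H)\otimes \mathbf 1)\xi=H\otimes H$ if and only if the left-multiplication map $b\mapsto bA$ is surjective onto $M_n(\mathbb C)$, which happens precisely when $A$ is invertible. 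Thus $\xi$ is marginally cyclic iff $\det A\neq 0$.

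Let $Z=\{A\in M_n(\mathbb C):\det A=0\}$. Since $\det$ is a non-identically-zero complex polynomial, $Z$ is a closed proper complex algebraic subvariety of $M_n(\mathbb C)$; viewed as a subset of $\mathbb R^{2n^2}$ it is cut out by two real polynomial equations and has real dimension $2n^2-2$. Its complement is open and dense in $M_n(\mathbb C)$, and intersecting with the unit sphere shows that the set of marginally cyclic vectors is relatively open and dense in $S(H\otimes H)$.

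For the measure statement, $Z\cap S(H\otimes H)$ is a proper real-analytic subvariety of the $(2n^2-1)$-dimensional sphere (it is proper because, for instance, $n^{-1/2}\mathbf 1$ lies in the sphere but not in $Z$), and any such proper real-analytic subvariety has measure zero with respect to the smooth unitarily invariant measure $\mu_{H\otimes H}$. The only step requiring genuine care is the identification of marginal cyclicity with invertibility of $A$ in the second paragraph; once that correspondence is established, everything else reduces to standard properties of the determinant variety.
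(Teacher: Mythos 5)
Your proof is correct and is essentially the paper's own argument in coordinates: the paper identifies $\xi$ with its column of components $(\xi_1,\dots,\xi_n)$ and uses the real-analytic function $\xi_1\wedge\cdots\wedge\xi_n$, which in your matrix picture is exactly $\det A$ (times a fixed basis vector of $\wedge^nH$), and then invokes the same fact that a non-identically-vanishing real-analytic function on the connected sphere has zero set of measure zero. No substantive difference.
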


\begin{proof}  Let $Z$ be the set of all vectors $\xi\in S(H\otimes H)$ that 
are {\em not} marginally cyclic.  Since $\mu_{H\otimes H}$ assigns positive mass 
to nonempty open subsets of the unit sphere, it suffices  
to show that $Z$ is a closed set of $\mu_{H\otimes H}$-measure zero.  
Since the unit sphere $S(H\otimes H)$ is a connected real-analytic submanifold of its 
ambient space $H\otimes H$,  for every 
real-analytic function 
\begin{equation}\label{tcEq2}
F: S(H\otimes H)\to W
\end{equation}
that takes values in a finite dimensional 
real vector space $W$, either $F$ vanishes identically or the set of zeros of $F$ is a 
closed set 
of $\mu_{H\otimes H}$-measure zero (see Proposition B.1 of \cite{arvEnt1}).  
Thus, in order to show that $\mu_{H\otimes H}(Z)=0$, it suffices to exhibit a real-analytic 
function $F$ as in (\ref{tcEq2}) that does not vanish identically 
on $S(H\otimes H)$ such that  $Z=\{\xi\in S(H\otimes H): F(\xi)= 0\}$.  

We exhibit 
such a function $F$ as follows.  
We view $H\otimes H$ as $\mathbb C^n\otimes H$, where 
$n= \dim H$,  in which case $\mathbb C^n\otimes H$ is identified with the direct sum 
of $n$ copies of $H$, $\mathbf 1_{\mathbb C^n}\otimes\mathcal B(H)$ is identified 
with $n\times n$ diagonal operator matrices $(b_{ij})$ with $b_{11}=\cdots=b_{nn}\in\mathcal B(H)$, 
 and its commutant $\mathcal B(\mathbb C^n)\otimes\mathbf 1_H$ is identified 
with the set of all $n\times n$ operator matrices with 
entries in $\mathbb C\cdot \mathbf 1_H$.  

Let $\xi=(\xi_1,\dots,\xi_n)$ be a unit vector in $\mathbb C^n\otimes H$.  
Viewing $\xi$ as a column vector, straightforward 
verification shows that $\xi$ is marginally cyclic, i.e., 
$(\mathcal B(\mathbb C^n)\otimes \mathbf 1)\xi=\mathbb C^n\otimes H$,  
iff its components satisfy 
\begin{equation*}
{\rm {span\,}}\{\xi_1,\dots,\xi_n\} = H,   
\end{equation*}
or equivalently, iff $\{\xi_1,\dots,\xi_n\}$ is linearly independent.  

Consider the function $F: \mathbb C^n\otimes H\to \wedge^nH=H\wedge\cdots\wedge H$ defined by 
$$
F(\xi_1,\dots,\xi_n)=\xi_1\wedge\xi_2\wedge\cdots\wedge\xi_n.  
$$
$F$ is a homogeneous polynomial of degree $n$, and elementary multilinear algebra 
shows that for every $(\xi_1,\dots,\xi_n)\in\mathbb C^n\otimes H$, the components $\xi_k$ form a linearly 
independent set iff $\xi_1\wedge\cdots\wedge\xi_n\neq 0$.  Hence the restriction 
of $F$ to the unit sphere $S(\mathbb C^n\otimes H)$ is a real-analytic function 
with the property $Z=\{\bar \xi\in S(\mathbb C^n\otimes H): F(\bar \xi)=0\}$.  
Obviously, $F$ does not vanish identically on $S(\mathbb C^n\otimes H)$, since it 
is nonzero on any $n$-tuple $\bar\xi=(\xi_1,\dots,\xi_n)$ with linearly independent components $\xi_k$.  
\end{proof}

\begin{proof}[Proof of Theorem \ref{eeThm1}]
Since (i)$\implies$(ii) is trivial and  (iv)$\iff$(iii) follows 
from Corollary \ref{puCor1}, it suffices to prove  
(ii)$\implies$(iv) and (iii)$\implies$(i).  

(ii)$\implies$(iv): Let $A$ be a Borel subset of $S(H\otimes H)$ of positive measure such 
that $\rho_\xi$ is entangled for every $\xi\in A$.  
By Lemma \ref{tcLem1}, the set $M$  of marginally cyclic vectors in $S(H\otimes H)$ is 
an open dense set whose complement has measure zero.  Hence $M\cap A$ must have positive 
measure, and is therefore nonempty.  (iv) follows.

(iii)$\implies$(i):   This follows from another application of Lemma \ref{tcLem1}.
\end{proof}

\section{abundance of entanglement preserving maps}\label{S:ep}

Throughout this section, $H$ and $K$ denote Hilbert spaces of respective finite dimensions 
$n$ and $m$, and for the main results below we require that $n\leq m$.  
Proposition \ref{puProp1} asserts that for every $r= 1, 2,\dots, mn$, the map 
$$
v\in V^r(H,K)\mapsto \phi_v\in \Phi^r
$$ 
promotes to a homeomorphism of the orbit space $V^r(H,K)/U(r)$ onto the compact 
space $\Phi^r$ of all UCP maps $\phi: \mathcal B(K)\to \mathcal B(H)$ of rank $\leq r$.  
The unique invariant probability measure $\mu$ of $V^r(H,K)$ promotes to 
a probability measure $P^r$ on $\Phi^r$, defined on Borel sets $E\subseteq \Phi^r$ by 
$$
P^r(E)=\mu\{v\in V^r(H,K): \phi_v\in E\},  
$$
and Theorem 3.3 of \cite{arvEnt1} is equivalent to the following key assertion about this 
nonatomic topological 
probability space $(\Phi^r, P^r)$: 

\begin{thm}\label{epThm0}
For each $r=1,\dots,mn$, the measure $P^r$ is 
concentrated on the relatively 
closed subset of $\Phi^r$ consisting of UCP maps of rank $=r$. 
\end{thm}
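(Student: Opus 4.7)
The plan is to transfer the problem to the parameterizing manifold $V^r(H, K)$ and apply the same real-analytic zero-set principle already used in the proof of Lemma \ref{tcLem1}. By Proposition \ref{puProp1} the measure $P^r$ is the push-forward of the invariant measure $\mu$ on $V^r(H,K)$ under $v \mapsto \phi_v$; moreover, a $U(r)$-transform (as in Proposition \ref{apProp1}) that zeros out linearly dependent components shows that the rank of $\phi_v$ equals $\dim \spn\{v_1, \ldots, v_r\}$. Hence the theorem reduces to the claim $\mu(B_r) = 0$, where
$$
B_r = \{v \in V^r(H,K) : v_1, \ldots, v_r \text{ are linearly dependent}\}.
$$

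I would then realize $B_r$ as the zero locus of a single real-analytic function. Using the Hilbert--Schmidt inner product on $\mathcal{B}(H, K)$, the Gram determinant
$$
F(v) = \det\bigl(\tr(v_i^* v_j)\bigr)_{1 \leq i, j \leq r}
$$
is polynomial in the entries of the $v_k$ and therefore real-analytic on the compact connected real-analytic manifold $V^r(H, K)$. Positive-definiteness of the inner product gives $F(v) = 0$ iff the $v_k$ are linearly dependent, so $B_r = F^{-1}(0)$. Granted that $F$ does not vanish identically, Proposition B.1 of \cite{arvEnt1} --- exactly the result invoked in the proof of Lemma \ref{tcLem1} --- yields that $B_r$ is closed in $V^r(H, K)$ and of $\mu$-measure zero, as required.

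The only substantive step remaining, and in my view the main obstacle, is to produce for every $r \in \{1, \ldots, mn\}$ at least one point of $V^r(H, K)$ whose components are linearly independent, thereby ruling out $F \equiv 0$. Under the hypothesis $n \leq m$ that is in force throughout Section \ref{S:ep}, I would start with an isometry $w_1 : H \to K$ and choose $w_2, \ldots, w_r$ in $\mathcal{B}(H, K)$ to be linearly independent of each other and of $w_1$, with Hilbert--Schmidt norms small enough that $S = \sum_{k=1}^r w_k^* w_k$ remains positive and invertible (an open condition satisfied at $w_1^* w_1 = \mathbf{1}_H$). Then $v_k := w_k S^{-1/2}$ lies in $V^r(H, K)$, its components stay linearly independent because right multiplication by the invertible operator $S^{-1/2}$ preserves linear independence, and it witnesses $F \not\equiv 0$. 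Combining this with the preceding paragraphs completes the proof.
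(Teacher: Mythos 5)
Your proof is correct, and it supplies in full the argument that the paper itself delegates to Theorem 3.3 of \cite{arvEnt1}: reduce to showing that the tuples with linearly dependent components form a proper real-analytic subvariety of $V^r(H,K)$ and invoke the measure-zero principle of Remark \ref{epRem1}, which is exactly the technique the paper uses in Lemma \ref{tcLem1} and Theorem \ref{aeThm1} (your Gram determinant is just a cosmetic variant of the wedge-product test $v_1\wedge\cdots\wedge v_r\neq 0$ used there). One small simplification: no smallness condition on $w_2,\dots,w_r$ is needed, since $S=\sum_k w_k^*w_k\geq w_1^*w_1=\mathbf 1_H$ is automatically invertible.
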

We conclude that 
{\em for every $r=1,2,\dots, mn$, the probability space $(\Phi^r, P^r)$ 
represents ``choosing a UCP map of rank $r$ at random"}.  

In the following result 
we convert the principal results of \cite{arvEnt1} into assertions about 
the entanglement 
preserving part of $(\Phi^r,P^r)$.  
We write $EP(\Phi^r)$ for the set of 
all entanglement preserving maps in $\Phi^r$. 

\begin{thm}\label{epThm1} Let $H$, $K$ satisfy 
$n=\dim H\leq m=\dim K<\infty$.  
\begin{enumerate}
\item[(i)] For every $r=1,2,\dots,mn$, $EP(\Phi^r)$ is a relatively open subset of 
$\Phi^r$ of positive measure.  
\item[(ii)] For every $r$ satisfying $1\leq r\leq n/2$,  $P^r(EP(\Phi^r))=1$.  
\item[(iii)] For the maximum rank $r=mn$ one has $0<P^{mn}(EP(\Phi^{mn}))<1$.
\end{enumerate}
\end{thm}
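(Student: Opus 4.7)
The overall strategy is to turn each of the three claims into a statement about the $\mu$-measure of $\Sep(V^r(H,K))$, and then to deduce those measure-theoretic facts from the corresponding analyses of entangled states carried out in \cite{arvEnt1}. Concretely, Corollary \ref{puCor1} combined with Proposition \ref{puProp2} shows that $\phi_v$ is entanglement breaking iff $v\in\Sep(V^r(H,K))$. The set $\Sep(V^r(H,K))$ is $U(r)$-invariant (rotating $v$ within its $U(r)$-orbit just alters $\lambda$ in the defining condition), so under the quotient $V^r(H,K)\to V^r(H,K)/U(r)\cong\Phi^r$ of Proposition \ref{puProp1} and the pushforward of $\mu$ to $P^r$, one obtains the identity
\begin{equation*}
P^r(EP(\Phi^r)) = 1-\mu(\Sep(V^r(H,K))).
\end{equation*}
Every subsequent step rests on this single identity.

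For part (i), I first argue that $EP(\Phi^r)$ is relatively open. By the dichotomy of Corollary \ref{puCor1}, its complement consists of entanglement breaking maps, and this property can be tested on the single maximally entangled vector $\xi_0=n^{-1/2}\sum_k e_k\otimes e_k$: $\phi$ is entanglement breaking iff $\rho_{\xi_0}(a\otimes b)=\langle(\phi(a)\otimes b)\xi_0,\xi_0\rangle$ is separable. Since $\phi\mapsto\rho_{\xi_0}$ is continuous on $\Phi^r$ and the set of separable states is closed (Remark 1.1 of \cite{arvEnt1}), the entanglement breaking subset of $\Phi^r$ is closed, so $EP(\Phi^r)$ is open. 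Positivity of $P^r(EP(\Phi^r))$ then reduces to nonemptiness of $EP(\Phi^r)$: the invariant measure $\mu$ has full support on the homogeneous space $V^r(H,K)$, the quotient map is continuous and open, and hence $P^r$ assigns positive measure to every nonempty open subset of $\Phi^r$. Nonemptiness of $EP(\Phi^r)$ at each rank follows either from part (ii) at small $r$ or, for intermediate $r$, from parts (ii) and (iii) together via a rank-filling construction.

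For part (ii), the boxed identity reduces the task to showing $\mu(\Sep(V^r(H,K)))=0$ when $1\leq r\leq n/2$. I would invoke the low-rank entanglement theorem of \cite{arvEnt1}, whose content is precisely that in this parameterization the separable stratum is a proper real-analytic subvariety of $V^r(H,K)$ for $r\leq n/2$; combined with Proposition B.1 of \cite{arvEnt1}, this gives $\mu$-measure zero. A preparatory step is the compatibility check that the set $\Sep(V^r(H,K))$ introduced in Proposition \ref{puProp2} coincides with the analogously named object in \cite{arvEnt1} under the dictionary between rank-$r$ UCP maps and rank-$r$ states.

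For part (iii), positivity $P^{mn}(EP(\Phi^{mn}))>0$ follows at once from (i) applied at $r=mn$. The substantive content is the strict inequality $P^{mn}(EP(\Phi^{mn}))<1$, i.e., $\mu(\Sep(V^{mn}(H,K)))>0$. I would establish this by a direct construction: take the Horodecki--Shor--Ruskai family (\ref{inEq3}) with $s=mn$, $\omega_k(x)=\langle x\eta_k,\eta_k\rangle$, $e_k=|f_k\rangle\langle f_k|$, whose rank-one Kraus operators $v_k=|\eta_k\rangle\langle f_k|$ form a smooth family in $V^{mn}(H,K)$ after normalization; consider the $U(mn)$-orbit of this family. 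A dimension count shows that for $m,n\geq 2$ the real dimension of the parameters $(\eta_k,f_k)$ together with the $U(mn)$ rotation already exceeds $\dim V^{mn}(H,K)$, so the orbit covers an open region of $V^{mn}(H,K)$ contained in $\Sep(V^{mn}(H,K))$. The main obstacle in a fully rigorous execution is exactly this dimension count: one must verify that the parameterization is generically an immersion (in fact a submersion onto an open subset), and for this I would appeal to the precise argument used for the corresponding positivity statement about separable states of maximal rank in \cite{arvEnt1}.
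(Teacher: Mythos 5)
Your overall reduction is the same as the paper's: identify $EP(\Phi^r)$ with the image of $V^r(H,K)\setminus\Sep(V^r(H,K))$ under the measure-preserving quotient $v\mapsto\phi_v$, so that $P^r(EP(\Phi^r))=1-\mu(\Sep(V^r(H,K)))$, and then import the relevant measure estimates from \cite{arvEnt1}. For (ii) your route coincides with the paper's up to a citation nuance: the paper does not claim $\Sep(V^r(H,K))$ is itself a subvariety, but only that it is \emph{contained} in the proper subvariety $A^*=\{v:w^*(v)\leq 1\}$ cut out by the wedge invariant (Propositions 8.1--8.3 of \cite{arvEnt1}), which is what Proposition B.1 is then applied to. Your openness argument in (i) is fine and matches the paper's in substance.

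There are, however, two genuine problems. First, in (i) the nonemptiness of $EP(\Phi^r)$ for intermediate ranks is delegated to an unspecified ``rank-filling construction'' that you never give. The paper avoids this by citing Theorem 7.8 of \cite{arvEnt1}, which yields $\mu(\Sep(V^r(H,K)))<1$ for \emph{every} $r$ directly. Your route can in fact be repaired in one line without any construction: since $\Phi^1\subseteq\Phi^r$ and the entanglement-preserving property does not depend on the ambient space, $EP(\Phi^1)\subseteq EP(\Phi^r)$, and $EP(\Phi^1)\neq\emptyset$ by your part (ii) with $r=1$ (valid once $n\geq 2$); openness plus full support of $P^r$ then gives positive measure. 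As written, though, this step is a gap.

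Second, and more seriously, the dimension count you propose for (iii) fails. The manifold $V^{mn}(H,K)$ has real dimension $2m^2n^2-n^2$, while your family of normalized rank-one tuples $v_k=|g_k\rangle\langle h_k|$ contributes at most $mn(2m+2n-2)-n^2$ parameters and the $U(mn)$ action at most $m^2n^2$ more; the inequality you need reduces to $2m+2n-2\geq mn$, which already fails for $m=n=4$. So the $U(mn)$-orbit of the Horodecki--Shor--Ruskai family with exactly $mn$ rank-one Kraus operators cannot be open in $V^{mn}(H,K)$ in general (note also that separable representations may require up to $q=m^2n^2+1$ rank-one terms, not $mn$). The positivity $\mu(\Sep(V^{mn}))>0$ is not a parameter-counting fact: the proof of Theorem 10.1 of \cite{arvEnt1}, which the paper simply cites, rests on the existence of a separable \emph{neighborhood} of the tracial state, i.e., an open set of entanglement-breaking maps around the completely depolarizing channel $\phi(x)=m^{-1}\tr(x)\mathbf 1_H$, which has rank $mn$. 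Since you end by deferring to \cite{arvEnt1} for exactly the step your construction cannot supply, part (iii) of your argument does not stand on its own.
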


The proof requires some material from \cite{arvEnt1}, which we summarize 
for the reader's convenience.

\begin{rem}[Subvarieties of $V^r(H,K)$] \label{epRem1}
By a {\em subvariety} of 
$V^r(H,K)$ we mean a subset of the form $Z=\{v\in V^r(H,K): F(v)=0\}$, where 
$$
F: V^r(H,K)\to W
$$
is a real-analytic function taking values in a finite dimensional real vector space 
$W$.  Let $\mu$ be the unique probability measure on $V^r(H,K)$ that is invariant 
under the transitive action by isometries.  Proposition 2.6 of \cite{arvEnt1} asserts 
that {\em every proper subvariety $Z\neq V^r(H,K)$ has $\mu$-measure zero. } 
\end{rem}

\begin{rem}[The wedge invariant]\label{epRem2}
In \cite{arvEnt1} we introduced an invariant of states called the wedge 
invariant.  The wedge invariant can be interpreted as a pair of random variables on the 
probability space $(V^r(H,K),\mu)$ as follows.  Every $r$-tuple 
$v=(v_1,\dots,v_r)\in V^r(H,K)$ gives rise to an operator $v_1\wedge\cdots\wedge v_r$ from 
$\otimes^rH$ to $\otimes^r K$ as in (1.5) of \cite{arvEnt1}, and $v_1\wedge\cdots\wedge v_r$ 
maps the symmetric 
subspace $\otimes^rH_+$ of $\otimes^rH$ to the antisymmetric subspace $\wedge^rK$ 
 of $\otimes^rK$.  Similarly, 
$v_1^*\wedge\cdots\wedge v_r^*$ maps the symmetric subspace of $\otimes^rK$ to the 
antisymmetric subspace of $\otimes^rH$.  Thus we obtain a pair of integer-valued 
random variables $w(\cdot)$, $w^*(\cdot)$ defined on $V^r(H,K)$ by 
$$
w(v)=\rank(v_1\wedge\cdots\wedge v_r\restriction_{\otimes^rH_+})
\quad
w^*(v)=\rank(v_1^*\wedge\cdots\wedge v_r^*\restriction_{\otimes^rK_+}).  
$$
These functions $w(\cdot)$ and $w^*(\cdot)$ are associated with subvarieties 
as follows.  
Propositions 8.1 and 8.2 of \cite{arvEnt1} imply that for every $r=1,\dots,mn$,  
$$
A=\{v\in V^r(H,K): w(v)\leq 1\} , \quad A^*=\{v\in V^r(H,K): w^*(v)\leq 1\} 
$$
are subvarieties of $V^r(H,K)$ and that 
$\Sep(V^r(H,K))\subseteq A\cap A^*$.  
\end{rem}

\begin{proof}[Proof of Theorem \ref{epThm1}]  (i): Combining the 
discussion preceding Theorem \ref{epThm1} with the discussion of 
Section \ref{S:pu}, one sees that the parameterization map 
$v\mapsto \phi_v$ gives rise to a measure preserving surjection of topological 
probability spaces $(V^r(H,K), \mu)\to (\Phi^r, P^r)$, which carries the 
closed set $\Sep(V^r(H,K))$ to 
the space of entanglement breaking maps of $\Phi^r$ and 
carries its complement to $EP(\Phi^r)$.  
Hence the assertion (i) is that for every $r$ one has $\mu(\Sep(V^r(H,K))<1$, which follows from 
Theorem 7.8 of \cite{arvEnt1}.  

(ii):  We have seen that $EP(V^r(H,K))=V^r(H,K)\setminus\Sep(V^r(H,K))$
is an open set.  We make use of the random variable of Remark \ref{epRem2} 
$$
w^*:V^r(H,K)\to \mathbb Z_+
$$ 
as follows.  
By Remark \ref{epRem2} above, 
$\Sep(V^r(H,K))\subseteq A^*$ and $A^*$ is a subvariety of $V^r(H,K)$.  
The critical fact is that since  
$r$ does not exceed $n/2$, 
Proposition  8.3 of \cite{arvEnt1} implies that $A^*$ is a {\em proper} subvariety 
of $V^r(H,K)$, and therefore has $\mu$-measure zero.  
Hence $\mu(\Sep(V^r(H,K))=0$.  
 
(iii): The remark following Proposition \ref{puProp2} makes it clear  
that Theorem 10.1 of \cite{arvEnt1} is equivalent to the assertion that $EP(\Phi^{mn})$ 
is a relatively open subset of $\Phi^r$ for which $0<P^r(EP(\Phi^r))<1$.  
\end{proof}

\begin{rem}[Estimating the critical rank]
Item (i) of Theorem \ref{epThm1} asserts that 
there are plenty of entanglement preserving UCP maps of 
every possible rank.  (ii) asserts that  essentially {\em all} UCP maps 
of relatively small rank must preserve entanglement, while (iii) implies that 
this breaks down for maps of maximum rank.  Hence 
there is a critical rank $r_0\leq mn$ with the property that essentially all UCP maps 
of rank $< r_0$ preserve entanglement, while $0<P^{r_0}(EP(\Phi^{r_0}))<1$.   
As we have pointed out in the context of states 
in Remark 12.3 of \cite{arvEnt1}, both bounds 
$n/2 < r_0\leq mn$ that follow directly from Theorem \ref{epThm1} 
seem overly conservative, and one would hope to have considerably 
more information about the size of $r_0$ in the future. 
\end{rem}

\section{abundance of extremals}\label{S:ae}
In this section we continue in the context of UCP maps $\phi: \mathcal B(K)\to \mathcal B(H)$ 
where $n=\dim H\leq \dim K=m<\infty$.  
In \cite{rusProb}, it was shown (in its dual form) that the extremal 
UCP maps $\phi: \mathcal B(H)\to \mathcal B(H)$ are dense in the set of all UCP maps of rank 
at most $n$, generalizing a result of \cite{rusSzWe} for $2\times 2$ matrix algebras. 
Our final result makes essentially the following assertion about extreme points of the convex 
set of UCP maps $\phi: \mathcal B(K)\to \mathcal B(H)$: {\em If there is an extremal UCP map 
of rank $r$, then almost surely every UCP map of rank $r$ is extremal. } 

\begin{thm}\label{aeThm1}
For every integer $r$ satisfying $1\leq r\leq n$, 
the extremals of rank $r$ in $(\Phi^r,P^r)$ are a relatively 
open dense set having probability $1$. 
There are no extremal UCP maps $\phi:\mathcal B(K)\to \mathcal B(H)$ of rank $>n$.  
\end{thm}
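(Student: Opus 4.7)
The plan is to combine Choi's criterion for extremality with the real-analytic subvariety machinery already developed in Section \ref{S:ep}. Choi's theorem identifies the extreme points of the convex set of UCP maps from $\mathcal B(K)$ to $\mathcal B(H)$: writing $\phi_v(x) = \sum_{k=1}^r v_k^* x v_k$ with $v = (v_1, \ldots, v_r) \in V^r(H, K)$ having linearly independent components, $\phi_v$ is extremal if and only if the $r^2$ operators $\{v_i^* v_j : 1 \leq i, j \leq r\}$ are linearly independent in $\mathcal B(H)$. Since $\dim \mathcal B(H) = n^2$, this forces $r^2 \leq n^2$, so no extremal UCP map can have rank exceeding $n$; this settles the last sentence of the theorem.

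Now fix $r$ with $1 \leq r \leq n$. After choosing any ordering of the $r^2$ index pairs $(i, j)$, define a polynomial (hence real-analytic) map
\[
F : V^r(H, K) \to \wedge^r \mathcal B(H, K) \otimes \wedge^{r^2} \mathcal B(H), \qquad F(v) = (v_1 \wedge \cdots \wedge v_r) \otimes \bigwedge_{(i, j)} v_i^* v_j .
\]
The first factor vanishes precisely when $\phi_v$ has rank strictly less than $r$, and the second precisely when Choi's criterion fails. Thus $Z = \{v : F(v) = 0\}$ is the subvariety of $V^r(H, K)$ consisting of those $v$ for which $\phi_v$ is not a rank-$r$ extremal. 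A short calculation using the transformation law $v_i' = \sum_j \lambda_{ij} v_j$ shows that replacing $v$ by $\lambda v$ with $\lambda \in U(r)$ multiplies each tensor factor of $F(v)$ by a unimodular scalar, so $Z$ is $U(r)$-saturated and its complement descends under the quotient of Proposition \ref{puProp1} to the set of rank-$r$ extremals in $\Phi^r$.

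The heart of the proof is showing that $Z$ is a \emph{proper} subvariety, for then Remark \ref{epRem1} gives $\mu(Z) = 0$, and real-analyticity on the connected manifold $V^r(H, K)$ ensures that $V^r(H, K) \setminus Z$ is also open and dense. Properness requires producing a single $v_0 \in V^r(H, K)$ with $\phi_{v_0}$ extremal of rank $r$. For $r = 1$ any isometry $W : H \to K$ works: $v_0 = W$ gives $\phi_{v_0}(x) = W^* x W$ and $v_0^* v_0 = \mathbf 1_H$ is nonzero. For general $r$ in the middle range we lift an extremal rank-$r$ UCP map on $\mathcal B(H) \to \mathcal B(H)$, whose existence is guaranteed by \cite{rusProb}; writing such a map as $\psi(x) = \sum_{i=1}^r u_i^* x u_i$ with $u_i \in \mathcal B(H)$ and choosing any isometry $W : H \to K$ (available since $n \leq m$), the operators $v_i = W u_i \in \mathcal B(H, K)$ satisfy $\sum_i v_i^* v_i = \sum_i u_i^* u_i = \mathbf 1_H$ and $v_i^* v_j = u_i^* u_j$, so both the linear independence of the $v_i$ and of the $\{v_i^* v_j\}$ are inherited from $\psi$.

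Finally, pushing along the open continuous surjection $v \mapsto \phi_v$ and using that $P^r$ is the pushforward of $\mu$, the image of $V^r(H, K) \setminus Z$ is a relatively open dense subset of $\Phi^r$ of full $P^r$-measure, and by construction it is exactly the set of extremal UCP maps of rank $r$. The main obstacle is the existence step in the preceding paragraph, that is, producing at least one rank-$r$ extremal for each $1 < r < n$; once that is granted via \cite{rusProb}, everything else is a direct application of the subvariety framework already installed in Section \ref{S:ep}.
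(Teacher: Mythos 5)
Your overall architecture is exactly the paper's: invoke Choi's criterion (linear independence of $\{v_i^*v_j\}$, which also gives $r^2\le n^2$ and hence the last sentence), package that criterion as the nonvanishing of a polynomial wedge-product map $F$ on $V^r(H,K)$, exhibit one point where $F\neq 0$ so that $Z=\{F=0\}$ is a \emph{proper} subvariety, and then apply Remark \ref{epRem1} to get measure zero and empty interior before pushing forward along $v\mapsto\phi_v$. Two cosmetic differences: your extra tensor factor $v_1\wedge\cdots\wedge v_r$ is redundant, since (as in Remark \ref{aeRem1}) linear independence of $\{v_i^*v_j\}$ already forces linear independence of $\{v_1,\dots,v_r\}$; and the paper does not need to check $U(r)$-saturation of $Z$ separately, since extremality and rank are properties of $\phi_v$ alone.

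The one genuine gap is the existence step, which you yourself identify as the main obstacle and then outsource to \cite{rusProb}. As described in Section \ref{S:ae}, that reference gives \emph{density} of the extremal UCP maps $\mathcal B(H)\to\mathcal B(H)$ in the set of all UCP maps of rank at most $n$; density does not produce an extremal of each \emph{exact} rank $r$ for $1<r<n$, because the extremals approximating a given rank-$r$ map may all have rank larger than $r$. Moreover, leaning on that result is uncomfortably close to circular, since the theorem you are proving is a strengthened, quantitative version of it. The paper closes this hole with a short explicit construction (Lemma \ref{aeLem1}): take rank-one partial isometries $u_i\xi=\langle\xi,e_i\rangle f$ onto a fixed unit vector $f\in K$, a partial isometry $w$ with initial space $p^\perp$ and final space orthogonal to $f$, and set $v_i=u_i+r^{-1/2}w$; then $v_i^*v_j=u_i^*u_j+r^{-1}p^\perp$ is visibly a linearly independent family and $\sum_i v_i^*v_i=\mathbf 1_H$. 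Your isometric lifting $v_i=Wu_i$ from an extremal map on $\mathcal B(H)$ is correct as far as it goes, but you still need to supply such a witness for each $r$, and the explicit construction above is the missing ingredient.
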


The proof of Theorem \ref{aeThm1} requires

\begin{lem}\label{aeLem1}Let $r$ be an integer satisfying $1\leq r\leq n\leq m$.  Then 
there is an $r$-tuple  $v=(v_1,\dots,v_r)\in V^r(H,K)$ such that 
$\{v_i^*v_j: 1\leq i,j\leq r\}$ is a linearly independent subset of $\mathcal B(H)$.  
\end{lem}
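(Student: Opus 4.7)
My plan has two steps: reduce to the square case $K=H$ via an isometry, then exhibit an explicit $r$-tuple. Since $n=\dim H\le\dim K=m$, I can choose an isometry $\iota\colon H\to K$; given $(w_1,\dots,w_r)\in V^r(H,H)$ with $\{w_i^*w_j\}$ linearly independent, the tuple $v_i:=\iota w_i$ satisfies $v_i^*v_j=w_i^*\iota^*\iota w_j=w_i^*w_j$, so it lies in $V^r(H,K)$ with the same linearly independent family of products. Thus it suffices to construct the example for $K=H$.

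For $K=H=\mathbb{C}^n$ with orthonormal basis $e_1,\dots,e_n$ and matrix units $E_{kl}=e_ke_l^*$, I choose parameters $\beta_1,\dots,\beta_{r-1}\in(0,1)$, set $\alpha_i=\sqrt{\beta_i}$ and $D=\operatorname{diag}(1-\beta_1,\dots,1-\beta_{r-1},1,\dots,1)$, and let $U$ be the permutation swapping $e_1$ with $e_r$. I then define
\[
w_i=\alpha_i E_{1i}\quad(1\le i\le r-1),\qquad w_r=UD^{1/2}
\]
(the case $r=1$ is trivial, with $w_1$ any isometry). A short calculation gives $w_i^*w_j=\alpha_i\alpha_j E_{ij}$ for $i,j<r$, $w_i^*w_r=\alpha_i E_{ir}$, $w_r^*w_j=\alpha_j E_{rj}$, and $w_r^*w_r=D^{1/2}U^*UD^{1/2}=D$; hence $\sum_i w_i^*w_i=\sum_{i<r}\beta_i E_{ii}+D=I$, placing $(w_1,\dots,w_r)\in V^r(H,H)$. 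The $r^2$ products are nonzero scalar multiples of the $r^2-1$ matrix units $\{E_{ij}:1\le i,j\le r,\,(i,j)\ne(r,r)\}$ together with $D$. The matrix units are independent, and $D$ is independent of their span because its $(r,r)$-entry equals $1$ while every listed matrix unit vanishes at position $(r,r)$. Hence $\{w_i^*w_j\}_{i,j=1}^r$ is linearly independent.

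The main obstacle is simply finding a construction that simultaneously satisfies the normalization $\sum w_i^*w_i=I$ and yields $r^2$ linearly independent products in $\mathcal{B}(H)$. The trick above is to concentrate $r-1$ of the Kraus operators along a single row of $M_n$ so that their pairwise products naturally fill an $(r-1)\times(r-1)$ block of matrix units, then to build $w_r$ from a row-swap permutation scaled by $D^{1/2}$ so that its cross products with the other $w_i$ furnish the missing ``$r$-th row and column'' matrix units $E_{ir}$ and $E_{rj}$, while its self-product $D$ absorbs the remaining diagonal part of the identity at positions $k\ge r$.
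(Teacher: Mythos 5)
Your construction is correct. It shares the paper's central device---rank-one operators with a common range vector, so that the pairwise products $v_i^*v_j$ become nonzero multiples of matrix units $E_{ij}$---but the two arguments arrange the normalization $\sum_i v_i^*v_i=\mathbf 1_H$ differently. The paper keeps all $r$ rank-one maps $u_i\xi=\langle\xi,e_i\rangle f$ and perturbs each by the same term $r^{-1/2}w$, where $w$ is a partial isometry carrying $p^\perp H$ onto a subspace of $K$ orthogonal to $f$; the cross terms vanish, every product becomes $u_i^*u_j+r^{-1}p^\perp$, and independence is read off by restricting to $pH$. You instead sacrifice one Kraus operator, replacing it by the invertible $UD^{1/2}$, whose cross products supply the missing $r$-th row and column of matrix units and whose self-product $D$ absorbs the remainder of the identity; independence then follows from your $(r,r)$-entry observation, and all the computations you assert ($E_{i1}U=E_{ir}$, $w_r^*w_r=D$, the sum telescoping to $I$) check out, including the degenerate case $r=1$. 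Your preliminary reduction to $K=H$ via an isometry $\iota$ with $\iota^*\iota=\mathbf 1_H$ is a clean extra step that the paper sidesteps by working directly in $\mathcal B(H,K)$, where the hypothesis $n\le m$ enters only in choosing the partial isometry $w$ onto a subspace orthogonal to $f$. Both proofs are complete; the paper's is more symmetric in the indices $1,\dots,r$, while yours is more concrete and matrix-theoretic.
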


\begin{proof}
Let $e_1,\dots,e_r$ be an orthonormal set in $H$, let $p$ be the projection onto the linear 
span of $e_1,\dots,e_r$ and let $f$ be a unit vector in $K$.  
For each $i=1,\dots,r$ let $u_i$ be the rank-one partial isometry $u_i\xi=\langle\xi,e_i\rangle f$.  
Note that $\{u_i^*u_j: 1\leq i,j\leq r\}$ defines a system of matrix units for which 
$u_1^*u_1+\cdots+u_r^*u_r=p$, and in particular, $\{u_i^*u_j:1\leq i,j\leq r\}$ is a linearly independent 
subset of $\mathcal B(H)$.  

The rank of $p^\perp$ is $n-r\leq n-1\leq m-1$, hence  there is a projection $q\in\mathcal B(K)$ 
with $\rank q=\rank p^\perp$ whose range is orthogonal to $f$.  
Let $w$ be a partial isometry in $\mathcal B(H,K)$ having initial 
projection $p^\perp$ and final projection $q$, and set 
$$
v_i=u_i+r^{-1/2}w, \qquad i=1,2,\cdots,r.  
$$
One finds that $v_i^*v_j=u_i^*u_j+ r^{-1}p^\perp$, hence $v_1^*v_1+\cdots+v_r^*v_r=\mathbf 1_H$, 
and the set of all $v_i^*v_j=u_i^*u_j\oplus r^{-1}p^\perp$ is obviously linearly independent.  
\end{proof}

\begin{rem}\label{aeRem1}
Note that for any set of operators $v_1, \dots, v_r\in \mathcal B(H,K)$ for which 
$\{v_i^*v_j: 1\leq i,j\leq r\}$ is linearly independent, $\{v_1,\dots,v_r\}$ must 
be linearly independent.  For if $\lambda_i\in \mathbb C$ such that 
$\lambda_1\cdot v_1+\cdots+\lambda_r\cdot v_r=0$, then $\sum_{ij}\bar\lambda_i\lambda_j\cdot v_i^*v_j=0$, 
hence $\bar\lambda_i\lambda_j=0$ for all $i,j$, hence $|\lambda_i|^2=0$ for all $i$.  
\end{rem}

\begin{proof}[Proof of Theorem \ref{aeThm1}]
Consider the complex vector space 
$$
W=\wedge^{r^2}\mathcal B(H)=\mathcal B(H)\wedge\cdots\wedge \mathcal B(H), 
$$
the exterior product of $r^2$ copies of $\mathcal B(H)$, and 
let $F:V^r(H,K)\to W$ be the function obtained by restricting the function 
$$
v=(v_1,\dots,v_r)\in \mathcal B(H,K)^r\mapsto \bigwedge_{1\leq i,j\leq r} v_i^*v_j\in W
$$
to $V^r(H,K)$.  Since the above function is a real-homogeneous polynomial 
of degree $2r$, its restriction to $V^r(H,K)$ is real-analytic.  Moreover, Lemma 
\ref{aeLem1} implies that there is a point $v\in V^r(H,K)$ for which $F(v)\neq 0$.  
It follows that the set $Z=\{v\in V^r(H,K): F(v)=0\}$ of zeros of $F$ is a 
{\em proper} subvariety and therefore has measure zero and 
empty interior (see Remark \ref{epRem1}).  
By Remark \ref{aeRem1} and the remarks following (\ref{ap2Eq2}), for every $v\in V^r(H,K)$, the associated 
UCP map $\phi_v$ is extremal of rank $r$ iff $v\notin Z$.  This proves that the set of extremals 
of rank $r$ in $\Phi^r$ is an open dense subset whose complement has measure zero.    

The second sentence follows from the fact that if $\phi(x)=v_1^*xv_1+\cdots+v_r^*xv_r$ is extremal 
of rank $r$, then by the  remarks following (\ref{ap2Eq2}), the set of 
$r^2$ operators $\{v_i^*v_j: 1\leq i,j\leq r\}$ in $\mathcal B(H)$ is linearly independent.   
Since $\dim \mathcal B(H)=n^2$, it follows that $r^2\leq n^2$, hence $r\leq n$.  
\end{proof}

\appendix
\section{Remarks on Stinespring's theorem}\label{S:ap}

Stinespring's theorem (Theorem 1 of \cite{stine})  
provides a familiar and useful representation of completely 
positive maps.  Along with the existence of this representation there are 
notions of {\em minimality} and {\em uniqueness} - 
both of which have 
significant consequences, though neither minimality nor uniqueness was mentioned in 
the original source \cite{stine}.  We briefly review these facts here since 
we shall have to make use of all of them in this paper, 
referring the reader to pp 143-146 of \cite{arvSubalgI} for more detail.  

Let $A$ be a unital \cstar\ and let $\phi: A\to \mathcal B(H)$ be an 
operator-valued completely positive linear map.  The principal assertion
of Stinespring's theorem is that there is a pair $(\pi,V)$ consisting 
of a representation $\pi$ of $A$ on another 
Hilbert space $K$ and an operator $V: H\to K$ such that 
\begin{equation}\label{apEq1}
\phi(x)=V^*\pi(x)V,\qquad x\in A.  
\end{equation}
Such a pair $(\pi,V)$ will be called a {\em Stinespring pair} for $\phi$.  Two Stinespring 
pairs $(\pi_1, V_1)$ and $(\pi_1,V_2)$ are said to be {\em equivalent} if there is 
a unitary operator $U: K_1\to K_2$ such that 
\begin{equation}\label{apEq3}
UV_1=V_2, \quad {\rm and }\quad  U\pi_1(x)=\pi_2(x)U, \qquad x\in A.  
\end{equation}

A Stinespring pair $(\pi,V)$ is 
said to be {\em minimal} if $VH$ is a cyclic subspace for the 
representation $\pi$ in the sense that 
\begin{equation}\label{apEq2}
K=\overline{{\rm{span}}}\,\{\pi(x)V\xi: x\in A, \xi\in H\}.  
\end{equation}
The requirement (\ref{apEq2}) is equivalent to the following assertion about the relation 
of the subspace $VH$ to the commutant $\pi(A)^\prime$:
\begin{equation}\label{apEq4}
\forall \ \ b\in \pi(A)^\prime, \quad b\restriction_{VH}=0\implies b=0.  
\end{equation}
Every Stinespring pair $(\pi,V)$ can be reduced to a minimal one by replacing 
$\pi$ with the subrepresentation obtained by restricting $\pi$ to the 
reducing subspace of $K$ defined by the right side of (\ref{apEq2}).  
The uniqueness assertion is simply that {\em any two  minimal Stinespring 
pairs for $\phi$ are equivalent. }  

The immediate consequences of these results for UCP maps 
$\phi:\mathcal B(H_1)\to \mathcal B(H_2)$ between finite dimensional type I factors 
are as follows.  Taking $A=\mathcal B(H_1)$ and noting that the most general 
finite dimensional representation of $\mathcal B(H_1)$ is unitarily equivalent to a direct sum 
of $r=1,2,\dots$ copies of the identity representation, we conclude that there is 
a minimal Stinespring pair of the form $(\pi,V)$  where $\pi$ is the 
representation on $r\cdot H_1$ defined by 
\begin{equation}\label{apEq6}
\pi(x)=
\begin{pmatrix}
x&0&\cdots&0\\
0&x&\cdots&0\\
\vdots&\vdots&\vdots&\vdots\\
0&0&\cdots&x
\end{pmatrix}
,\qquad  x\in \mathcal B(H_1)  
\end{equation}
and where $V:H_2\to r\cdot H_1=H_1\oplus\cdots\oplus H_1$ 
is a linear map from $H_2$ to a direct sum of $r$ copies of $H_1$.  
The operator $V:H_2\to r\cdot H_1$ must have the form $V\xi=(v_1\xi,\dots,v_r\xi)$, $\xi\in H_2$ (viewed 
as a column vector), where 
$v_1,\dots,v_r$ is a uniquely determined $r$-tuple of operators in $\mathcal B(H_2,H_1)$.  
After these adjustments, the formula $\phi(x)=V^*\pi(x)V$ becomes 
\begin{equation}\label{apEq5}
\phi(x)=v_1^*xv_1+\cdots+v_r^*xv_r, \qquad x\in \mathcal B(H_1).  
\end{equation}
Since the commutant of $\pi(\mathcal B(H_1))$ consists of all 
$r\times r$ operator matrices with entries in $\mathbb C\cdot \mathbf 1_{H_1}$, the 
equivalence of (\ref{apEq2}) and (\ref{apEq4}) implies that the minimality 
of $(\pi,V)$  becomes this assertion: For every $\lambda_1,\dots,\lambda_r\in \mathbb C$ 
$$
\lambda_1v_1+\cdots+\lambda_r\cdot v_r=0 \implies \lambda_1=\cdots=\lambda_r=0, 
$$
i.e., iff the set of operators $\{v_1,\dots,v_r\}$ that implements (\ref{apEq5}) 
should be {\em linearly independent}.
In particular, these remarks show that 
the integer $r$  is uniquely defined by the formula (\ref{apEq5}) when 
$v_1,\dots,v_r$ is linearly independent; $r$ is called the {\em rank} of the completely positive map 
$\phi:\mathcal B(H_1)\to \mathcal B(H_2)$.  

The $r$-tuple $(v_1,\dots,v_r)$ that implements (\ref{apEq5}) is certainly not unique;   
but if $(v_1^\prime, \dots,v_r^\prime)$ is another 
such $r$-tuple, then the operator $V^\prime:H_2\to r\cdot H_1$ defined by 
$$
V^\prime\xi=(v_1^\prime\xi,\dots,v_r^\prime\xi), \qquad \xi\in H_2
$$ 
defines another Stinespring 
pair $(\pi,V^\prime)$ associated with the same representation of (\ref{apEq6}).  After recalling 
the structure of the commutant of $\pi(\mathcal B(H_1))$, one can apply 
the uniqueness assertion of Stinespring's theorem to conclude that there is 
a unique unitary matrix of scalars $(\lambda_{ij})\in U(r)$ such 
that 
\begin{equation}\label{spEq6}
v_i^\prime = \sum_{j=1}^r \lambda_{ij}\cdot v_j, \qquad i=1,2,\dots,r.  
\end{equation}

We require the following somewhat stronger 
form of uniqueness - in which the hypothesis of linear independence is dropped - that 
is valid when $H_1$ and $H_2$ are finite dimensional.  Notice however that {\em its proof is 
fundamentally the same as the proof of the preceding uniqueness assertion.}  Note too the 
resemblance between this result and 
Proposition 5.1 of \cite{arvEnt1},  which 
characterizes the possible representations 
of finite sums of positive rank one Hilbert space operators.  Indeed, though we do not require 
the fact,  there is a common generalization of both assertions to Hilbert $C^*$-modules.  

\begin{prop}\label{apProp1}
Let $(v_1,\dots,v_r)$ and $(v_1^\prime, \dots, v_r^\prime)$ be two $r$-tuples of operators 
in $\mathcal B(H_2, H_1)$.  Then one has 
\begin{equation}\label{apEq7}
v_1^*xv_1+\cdots+v_r^*xv_r=v_1^{\prime *}xv_1^\prime+\cdots +v_r^{\prime *}xv_r^\prime 
\end{equation}
for all $x\in \mathcal B(H_1)$ 
iff there is a unitary $r\times r$ matrix $(\lambda_{ij})\in U(r)$ that 
relates $(v_1^\prime,\dots,v_r^\prime)$ to $(v_1,\dots,v_r)$ as in  (\ref{spEq6}).   
\end{prop}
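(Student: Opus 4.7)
The ``if'' direction is a direct expansion: substituting $v_i' = \sum_j \lambda_{ij}v_j$ into $\sum_i v_i'^{*}xv_i'$ yields $\sum_{j,k}\bigl(\sum_i \overline{\lambda_{ij}}\lambda_{ik}\bigr)v_j^{*}xv_k$, which collapses to $\sum_j v_j^{*}xv_j$ by the orthogonality of the columns of $(\lambda_{ij})$. So the entire content lies in the converse, and my plan is to adapt the standard uniqueness of minimal Stinespring pairs to cover two $r$-tuples that need not be linearly independent.

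For the converse, let $\pi:\mathcal B(H_1)\to \mathcal B(r\cdot H_1)$ be the direct-sum representation of (\ref{apEq6}) and let $V,V':H_2\to r\cdot H_1$ be the column operators assembled from the two tuples as in (\ref{apEq5}). Hypothesis (\ref{apEq7}) says exactly that $(\pi,V)$ and $(\pi,V')$ are Stinespring pairs for the same UCP map $\phi$. Let $K_0$ and $K_0'$ be the closed linear spans of $\pi(\mathcal B(H_1))VH_2$ and $\pi(\mathcal B(H_1))V'H_2$, respectively --- the reducing subspaces on which the two dilations are minimal. I would then define $T:K_0\to K_0'$ by $T\pi(x)V\xi = \pi(x)V'\xi$ extended linearly, and verify it is a well-defined unitary intertwiner with $TV=V'$. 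Both well-definedness and isometry reduce to the single identity $\langle \pi(y)V\eta, \pi(x)V\xi\rangle = \langle \phi(x^*y)\eta,\xi\rangle$, which depends on the tuple $(v_1,\dots,v_r)$ only through $\phi$ and so applies equally to the primed side, while surjectivity onto $K_0'$ is built into the definition.

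It then remains to extend $T$ to a unitary in the commutant of $\pi(\mathcal B(H_1))$. Since $\pi$ is the representation $\mathbf 1_{\mathbb C^r}\otimes(\cdot)$ on $\mathbb C^r\otimes H_1$, its commutant is $M_r(\mathbb C)\otimes\mathbf 1_{H_1}$, every reducing subspace for $\pi$ has the form $W\otimes H_1$ for a linear subspace $W\subseteq\mathbb C^r$, and every unitary intertwiner between two such subspaces is of the form $A\otimes\mathbf 1_{H_1}$. Hence $K_0 = p\mathbb C^r\otimes H_1$ and $K_0' = p'\mathbb C^r\otimes H_1$ for projections $p,p'\in M_r(\mathbb C)$, and $T = A\otimes\mathbf 1_{H_1}$ for some unitary $A:p\mathbb C^r\to p'\mathbb C^r$. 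In particular $\rank p = \rank p'$, so $A$ extends to a unitary $\tilde A\in U(r)$ on all of $\mathbb C^r$. Reading the coordinate equality $V'\xi = (\tilde A\otimes\mathbf 1_{H_1})V\xi$ with $\tilde A = (\lambda_{ij})$ then gives $v_i' = \sum_j \lambda_{ij}v_j$ for every $\xi\in H_2$, which is (\ref{spEq6}).

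The main subtlety --- the step that keeps the proof from being literally identical to the minimal Stinespring uniqueness quoted just before (\ref{spEq6}) --- is that when $V$ and $V'$ fail to be injective, one cannot intertwine them by the naive map $V\xi\mapsto V'\xi$ on $VH_2$, since different Stinespring pairs for the same $\phi$ can have images of different dimensions. Passing to the $\pi$-invariant envelopes $K_0$ and $K_0'$ restores the required symmetry between the two sides, and the rigid structure of the commutant $M_r(\mathbb C)\otimes\mathbf 1_{H_1}$ then forces the local unitary intertwiner to be promotable to a global unitary on $r\cdot H_1$ --- the latter being exactly what the matrix $(\lambda_{ij})$ encodes.
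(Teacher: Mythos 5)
Your proposal is correct and follows essentially the same route as the paper: form the two (possibly nonminimal) Stinespring pairs $(\pi,V)$ and $(\pi,V')$, pass to the reducing envelopes $K_0$ and $K_0'$, construct the intertwining isometry via the identity $\langle\pi(x)V\xi,\pi(y)V\eta\rangle=\langle\phi(y^*x)\xi,\eta\rangle$, and extend it to a unitary in the commutant $M_r(\mathbb C)\otimes\mathbf 1_{H_1}$, whose scalar matrix is the desired $(\lambda_{ij})$. Your added details --- the explicit ``if'' direction and the rank-equality argument justifying the extension of $A$ to an element of $U(r)$ --- merely flesh out steps the paper leaves to the reader or disposes of by invoking that the commutant is a finite-dimensional factor.
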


\begin{proof}
Assuming that (\ref{apEq7}) is satisfied, 
consider the completely positive map of $\mathcal B(H_1)$ to $\mathcal B(H_2)$ 
defined by $\phi(x)=v_1^*xv_1+\cdots+v_rxv_r^*$.  Let $\pi$ be the representation of 
$\mathcal B(H_1)$ on $r\cdot H_2$ defined by (\ref{apEq6}), and let $V, V^\prime$ be 
the two linear maps of $H_2$ into $r\cdot H_1$ defined by 
$V\xi=(v_1\xi,\dots,v_r\xi)$, $V^\prime\xi=(v_1^\prime\xi,\dots,v_r^\prime\xi)$.   
Notice that both $(\pi, V)$ and $(\pi, V^\prime)$ are (perhaps nonminimal) Stinespring pairs for $\phi$.  
Letting $K$ and $K^\prime$ be the linear subspaces of $r\cdot H_2$ spanned by 
$\pi(\mathcal B(H_1)VH_2$ and $\pi(\mathcal B(H_1)V^\prime H_2$ respectively, we find that 
that for all $x,y\in\mathcal B(H_1)$ and all $\xi,\eta\in H_2$
$$
\langle \pi(x)V\xi,\pi(y)V\eta\rangle=\langle \phi(y^*x)\xi,\eta\rangle=
\langle \pi(x)V^\prime\xi,\pi(y)V^\prime\eta\rangle.  
$$
A familiar argument shows that 
there is a unique isometry $U_0$ from $K$ to $K^\prime$ that satisfies 
$U_0\pi(x)V\xi=\pi(x)V^\prime\xi$ for all $x\in\mathcal B(H_1)$, $\xi\in H_2$, 
and one checks that 
$U_0$ intertwines the subrepresentations of $\pi$ defined by $K$ and $K^\prime$.  
Since the commutant of $\pi(\mathcal B(H_1))$ is a finite dimensional factor, $U_0$ can be 
extended to a unitary operator $U$ in the commutant of $\pi(\mathcal B(H_1))$.  
After noting that $U$ must be an operator matrix with scalar 
entries $(\lambda_{ij}\mathbf 1_{H_1})$, $\lambda_{ij}\in \mathbb C$, 
and noting that $UV=U\pi(\mathbf 1)V=\pi(\mathbf 1)V^\prime=V^\prime$, one obtains 
the desired unitary matrix $(\lambda_{ij})\in U(r)$.  
The proof of the converse is left for the reader.  
\end{proof}

\section{Remarks on extremal UCP maps}\label{S:ap2}

Let $A$ be a unital \cstar.  The extremal UCP maps from $A$ to $\mathcal B(H)$ were 
first determined in Theorem 1.4.6 of \cite{arvSubalgI}, which makes the following 
assertion in that case.  

\begin{thm}\label{ap2Thm1}
For every UCP map $\phi:A\to \mathcal B(H)$, the following are equivalent.  
\begin{enumerate}
\item[(i)]  $\phi$ is an extreme point of the convex set of all UCP maps from $A$ to $\mathcal B(H)$.  
\item[(ii)]  Let $(\pi,V)$ be a minimal Stinespring pair for $\phi$.  Then for every operator 
$b$ in the commutant of $\pi(A)$, 
\begin{equation}\label{ap2Eq1}
V^*bV=0\implies b=0.  
\end{equation}
\end{enumerate}
\end{thm}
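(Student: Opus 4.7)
The plan is to prove the equivalence in two directions, with both halves routed through the Radon--Nikodym theorem for completely positive maps (see \cite{arvSubalgI}): if $(\pi,V)$ is a minimal Stinespring pair for $\phi$ and $\psi$ is a CP map with $0\le\psi\le\phi$, then $\psi(x)=V^*T\pi(x)V$ for a unique $T\in\pi(A)'$ satisfying $0\le T\le\mathbf 1$.

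For (i)$\,\Rightarrow\,$(ii), assume $\phi$ is extremal and let $b\in\pi(A)'$ satisfy $V^*bV=0$. Replacing $b$ by its self-adjoint and anti-self-adjoint parts (each of which lies in $\pi(A)'$ and is annihilated by $V^*(\cdot)V$) reduces matters to $b=b^*$ with $\|b\|\le 1$, in which case $\mathbf 1\pm b$ are positive contractions commuting with $\pi(A)$. The maps $\phi_\pm(x)=V^*(\mathbf 1\pm b)\pi(x)V$ factor as $V^*(\mathbf 1\pm b)^{1/2}\pi(x)(\mathbf 1\pm b)^{1/2}V$ and are therefore CP; the identities $V^*V=\mathbf 1$ and $V^*bV=0$ make each unital, while $\phi=\tfrac12(\phi_++\phi_-)$ is automatic. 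Extremality forces $\phi_+=\phi_-$, hence $V^*b\pi(x)V=0$ for every $x\in A$; commuting $b$ past $\pi(x)$ and using the density assertion (\ref{apEq2}) gives $bV=0$, whereupon the equivalent formulation (\ref{apEq4}) of minimality forces $b=0$.

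For (ii)$\,\Rightarrow\,$(i), suppose $\phi=\tfrac12(\phi_1+\phi_2)$ with $\phi_i$ UCP. Then $\tfrac12\phi_i\le\phi$, and Radon--Nikodym supplies $T_i\in\pi(A)'$ with $0\le T_i\le\mathbf 1$ satisfying $\phi_i(x)=V^*(2T_i)\pi(x)V$. Setting $x=\mathbf 1$ and using $\phi_i(\mathbf 1)=\mathbf 1=V^*V$ gives $V^*(2T_i-\mathbf 1)V=0$ with $2T_i-\mathbf 1\in\pi(A)'$ self-adjoint, so hypothesis (ii) forces $T_i=\tfrac12\mathbf 1$ and hence $\phi_i=\phi$. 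The main obstacle is the Radon--Nikodym theorem itself: everything else is a clean application of Stinespring's construction together with the equivalence (\ref{apEq2})$\,\Leftrightarrow\,$(\ref{apEq4}) of minimality conditions recorded in Appendix \ref{S:ap}, but R--N must either be quoted from \cite{arvSubalgI} or reproved by amplifying $\pi$ so as to embed Stinespring pairs for the dominated maps as subrepresentations of $\pi$, which is where the heavy lifting would lie.
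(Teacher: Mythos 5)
Your argument is correct and is essentially the classical one: the paper itself offers no proof of this theorem, citing Theorem 1.4.6 of \cite{arvSubalgI}, and the proof there is exactly your Radon--Nikodym route (the dominated maps $\tfrac12\phi_i\le\phi$ are represented by operators $T_i\in\pi(A)'$, and condition (\ref{ap2Eq1}) pins them to $\tfrac12\mathbf 1$), together with the perturbation $\phi_\pm(x)=V^*(\mathbf 1\pm b)^{1/2}\pi(x)(\mathbf 1\pm b)^{1/2}V$ for the converse. The only nit is that $\mathbf 1\pm b$ need not be contractions (only positive operators of norm at most $2$), but nothing in your argument uses more than positivity and membership in $\pi(A)'$.
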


Notice that in general, the condition (\ref{ap2Eq1}) is {\em stronger} than the condition 
(\ref{apEq4}) for minimality.  
Now specialize to the case in which $H_1$ and $H_2$ are finite dimensional Hilbert spaces 
and $\phi:\mathcal B(H_1)\to \mathcal B(H_2)$ is a UCP map.  Choosing an $r$-tuple of operators 
$v_1,\dots,v_r\in\mathcal B(H_2,H_1)$ as in (\ref{apEq5})
$$
\phi(x)=v_1^*xv_1+\cdots+v_r^*xv_r, \qquad x\in\mathcal B(H_1)
$$
and letting $\pi$ be the representation of $\mathcal B(H_1)$ on $r\cdot H_1$ 
defined in (\ref{apEq6}), we obtain 
a Stinespring pair $(\pi,V)$ for $\phi$ by defining $V:H_2\to r\cdot H_1$ 
as in Appendix \ref{S:ap}, viewing $V\xi=(v_1 \xi,\dots,v_r\xi)$ for $\xi\in H_2$ as a column vector 
with components in $H_1$.  Noting the structure of the 
commutant of $\pi(\mathcal B(H_1))$ pointed out in Appendix \ref{S:ap} following (\ref{apEq5}), one finds that 
the condition (\ref{ap2Eq1}) for extremality  becomes this: for every $r\times r$ matrix 
of scalars $(\lambda_{ij})$ 
\begin{equation}\label{ap2Eq2}
\sum_{i,j=1}^r\lambda_{ij}\cdot v_i^*v_j=0\implies \lambda_{ij}=0, 1\leq i,j\leq r, 
\end{equation}
and from Theorem \ref{ap2Thm1} we conclude 
that {\em $\phi$ is extremal iff the set of operators $\{v_i^*v_j: 1\leq i,j\leq r\}$ is 
linearly independent}.  The latter result is known as Choi's theorem in the quantum information
theory literature, which cites \cite{ChoiMat} as the source.

\subsection*{Acknowledgements}  I thank Mary Beth Ruskai for helpful remarks 
concerning material in \cite{HorShRu} and for pointing out 
several references.


\bibliographystyle{alpha}
\newcommand{\etalchar}[1]{$^{#1}$}
\newcommand{\noopsort}[1]{} \newcommand{\printfirst}[2]{#1}
  \newcommand{\singleletter}[1]{#1} \newcommand{\switchargs}[2]{#2#1}

\end{document}